\documentclass[12pt]{article}

\usepackage[margin=1in]{geometry}
\usepackage{amsmath,amssymb,amsthm,mathtools,bm,color}
\usepackage{enumitem}
\usepackage{microtype}
\usepackage{hyperref}
\newcommand{\N}{\mathbb N}

\newcommand{\R}{\mathbb R}

\newcommand{\E}{\mathbb E}

\newcommand{\VEC}[1]{\mathbf{#1}}

\DeclareMathOperator{\inverseGauss}{IG}
\theoremstyle{plain}
\newtheorem{theorem}{Theorem}[section]

\newtheorem{definition}[theorem]{Definition}

\theoremstyle{remark}
\newtheorem{remark}[theorem]{Remark}

\usepackage{todonotes}

\title{Stable limits for the clone-mutant chain with mother-dependent mutation}
\author{
	Airam Blancas\thanks{Department of Statistics, ITAM, Mexico.
		Email: \texttt{airam.blancas@itam.mx}}
	\and
	Mar\'ia Clara Fittipaldi\thanks{Facultad de Ciencias, UNAM, Mexico.
		Email: \texttt{mcfittipaldi@ciencias.unam.mx}}
}

\date{}

\begin{document}
\maketitle

\begin{abstract}
We consider the general mother-dependent neutral mutation model defined in~\cite{BFH2}, consisting of a multitype branching process in which every individual carries one of $d$ alleles and each mutant child acquires a type different from that of its mother, according to a prescribed distribution. We assume that the total offspring distribution is critical and belongs to the domain of attraction of an $\alpha$-stable law, with $\alpha\in(1,2)$, while mutations are rare. Extending the results of Blancas and Rivero~\cite{BlancasRivero}, we prove the finite-dimensional convergence of the rescaled Markov chain that records, in each allelic generation, the sizes of the type-specific clone subfamilies and the mutant populations they produce. The limiting clone masses form a multitype Ji\v{r}ina process driven by stable subordinators with drift, whose branching mechanism combines the stable reproduction law with the mother-dependent mutation matrix.
\end{abstract}

\section{Introduction}
The general mother-dependent neutral mutations model, defined in~\cite{BFH2}, describes the evolution of a population across discrete generations, where every individual carries one of $d$ alleles and each mutant child acquires a type different from that of its mother, according to a prescribed distribution. The case where the prescribed distribution is uniform corresponds to the mother-dependent neutral mutation model studied in~\cite{BFH, BFH1}. 

In~\cite{BFH1}, we considered a critical population with finite offspring variance and analyze the genealogy of its allelic subfamilies, extending the framework of Bertoin~\cite{Bertoin10} from the infinite-allele setting to finitely many alleles.
In particular, we study the \emph{clone-mutant chain}, that is, the multidimensional Markov chain that records the sizes of the allelic subfamilies of each type together with the numbers of mutants that they beget. In a regime of large initial population and rare mutations, we proved that the rescaled chain converges to a discrete-time continuous-state Markov process driven by the masses of the mutant populations. From the first allelic generation onward, its clone component is a multitype Ji\v{r}ina process whose branching mechanism is the Laplace exponent of a $d$-dimensional inverse Gaussian subordinator carried by the coordinate axes, see \cite[Theorem 2.8]{BFH1}. Lifting this convergence to the genealogical level, we introduced a $d$-type tree of alleles and showed that, in the scaling limit, it converges to a tree-indexed multitype Ji\v{r}ina process, described by the ranked atoms of Poisson random measures with inverse Gaussian intensity.

As shown in \cite[Lemma 3.5]{BFH2}, the clone-mutant process associated with the general mother-dependent neutral mutation model can be obtained from the corresponding infinite-alleles BGW process, studied in \cite{Bertoin10} and \cite{BlancasRivero}, by an independent multinomial colouring with specific parameters. 
In \cite{BlancasRivero}, Blancas and Rivero establish a version of the limit theorems in \cite{Bertoin10}, under a different assumption on the reproduction law: the reproduction law has infinite variance and belongs to the domain of attraction of an $\alpha$-stable distribution, with $\alpha \in (1, 2)$. In particular, they obtain scaling limits for the two-dimensional clone-mutant process, both for the unconditioned process and for the process conditioned on nonextinction, and also establish the convergence, after re-normalization, of the allelic sub-populations towards a tree indexed Jirina branching process.

Combining the multidimensional structure described in \cite{BFH1} for the clone-mutant process associated with the uniform mother-dependent model under the finite-variance reproduction assumption with the infinite alleles stable clone-mutant limit established in \cite{BlancasRivero}, in the present work we prove the scaling limits of the clone-mutant process associated with the general mother-dependent model when the total offspring distribution is critical and belongs to the domain of attraction of an $\alpha$-stable law, while mutations are rare. This regime represents a substantial departure from the finite-variance case: it allows offspring distributions with heavy tails and, consequently, exceptionally large reproductive events that alter both the scaling and the structure of the limits. In particular, we show that the limiting clone masses form a multitype Ji\v{r}ina process driven by \(1/\alpha\)-stable subordinators, whose branching mechanism combines the stable reproduction law with the mother-dependent mutation matrix.

The remainder of the paper is organized as follows. Section \ref{sec:pre} provides details on the general mother-dependent neutral mutations model, including  a description of the clone-mutant Markov chain and its convergence in both the finite variance case and infinite alleles under regular variation. In Section \ref{sec:main}, we establish and prove our main result. We conclude with Section \ref{sec:out}, where we present the most relevant consequences of our main result. 

\section{Preliminaries}\label{sec:pre}

We recall the definition of the general mother-dependent neutral mutations model defined in \cite{BFH2}.  
Let \([d]=\{1,\ldots,d\}\) the set of types, with \(d\geq2\), and let  \(\VEC P=(P_{ij})_{i,j\in[d]}\) be a stochastic matrix satisfying \(P_{ii}=0\) for every \(i\in[d]\). The entry \(P_{ij}\) is the probability that a mutant child
of a type-\(i\) mother has type \(j\). 

Mutations are \emph{neutral}: the type of an individual affects the distribution of the types of its children, but not their total number. Conditional on a
type-\(i\) individual having \(\ell\) children, the types are assigned independently: each child is a clone of its mother, and thus has type \(i\), with probability \(1-r\); or is a mutant of type \(j\neq i\), with probability \(rP_{ij}\), where \(r\in(0,1)\) is the mutation probability.  

Let $\boldsymbol{\xi}^{\,i}=\bigl(\boldsymbol{\xi}^i(1),\ldots,\boldsymbol{\xi}^i(d)\bigr)$
denote the offspring vector of a type-\(i\) individual, where
\(\boldsymbol{\xi}^i(j)\) is the number of its type-\(j\) children.  Consequently, the distribution of $\boldsymbol{\xi}^{\,i}$ is
\begin{equation}\label{eq:offspring}
\boldsymbol{\mu}_i(\VEC v)
=
\mu^+(|\VEC v|)
\binom{|\VEC v|}{\VEC v(1),\ldots,\VEC v(d)}
(1-r)^{v(i)}
\prod_{j\neq i}(rP_{ij})^{\VEC v(j)},
\qquad
\VEC v\in\N_0^d,
\end{equation}
where \(|\VEC v|=\sum_{j=1}^d \VEC v(j)\). 

By construction 
\(\xi^{(+)}\coloneqq|\boldsymbol{\xi}^{\,i}|\sim \mu^+\) for any \(i\in[d]\), and the underlying genealogical structure is that of a  one-type BGW process with offspring distribution \(\mu^+\). The multitype process is obtained by assigning types to individuals according to the mother-dependent mutation rule determined by \(\VEC P\), see \cite[Lemma 3.5]{BFH2}. To describe the population generation by generation, let 
\( (\VEC Y_k:k\in\N_0) \) denote the resulting $d$-type BGW process with branching offspring distribution \((\boldsymbol{\mu}_i, i \in [d])\), where 
\(\VEC Y_k(j)\) is the number of
type-\(j\) individuals in genealogical generation \(k\). Indeed, if
\(\VEC Y_0=\VEC a\in\N_0^d\), then
\[
\VEC Y_{k+1}(j)
=
\sum_{i=1}^d\sum_{\ell=1}^{\VEC Y_k(i)}
\boldsymbol{\xi}_{k,\ell}^{\,i}(j),
\qquad
j\in[d],
\]
where, for each genealogical generation $k$, the offspring vectors  $\boldsymbol{\xi}_{k,\ell}^{\,i}$ are independent and have distribution \(\boldsymbol{\mu}_i\), for each \(i\). 
Hence \((|\VEC Y_k|:k\in\N_0)\) is the underlying one-type BGW process with offspring law \(\mu^+\).

\subsection{The clone--mutant Markov chain}
The genealogical structure is represented by a rooted tree when the population starts from a single individual. Vertices represent individuals and colours represent types. Each non-root vertex is connected to its mother by an edge, and a mark along an edge corresponds to a mutation event; see \cite[Section 3.4]{BFH} for a detailed construction. 
As defined in \cite[Section 3]{BFH2}, we will say that an individual belongs to the \(k\)th \emph{allelic generation} if
exactly \(k\) mutations occur along its ancestral line from the initial population. Thus, an allelic generation is determined by the number of mutation events, not by the number of distinct types encountered along a lineage. 
Individuals connected through ancestral lines containing no mutations form a \emph{clone subfamily}. The mutant children of a clone family are founders of new clone families in the next allelic generation. 

For \(k\in\N_0\) and \(i\in[d]\), let \(\VEC T_k(i)\) be the total
number of type-\(i\) individuals in allelic generation \(k\), and let
\(\VEC M_{k+1}(i)\) be the number of type-\(i\) mutant individuals that
found clone subfamilies in allelic generation \(k+1\). We collect these
quantities in the vectors
\[
\VEC T_k
=
\bigl(\VEC{T}_k(1),\ldots,\VEC{T}_k(d)\bigr),\,
\VEC M_{k+1}
=
\bigl(\VEC{M}_{k+1}(1),\ldots,\VEC{M}_{k+1}(d)\bigr).
\]
As stated in \cite[Proposition 3.1]{BFH2}, the general branching property at the stopping line formed by the mutant
individuals implies that
\(
\left(
(\VEC T_k,\VEC M_{k+1}):k\in\N_0
\right)\)
is a Markov chain starting from \(\VEC M_0=\VEC a\), with transition probabilities satisfying 
\begin{equation}\label{eq:transitionproba}
			\mathbb{P}_{\VEC{a}}(\VEC{T}_k =\VEC{x},\VEC{M}_{k+1}=\VEC{y} \mid \VEC{T}_{k-1}=\VEC{u},\VEC{M}_{k}=\VEC{v} )
			=
			\mathbb{P}_{\VEC{v}} (\VEC{T}_0=\VEC{x},\VEC{M}_1=\VEC{y}),
\end{equation}
		for all $\VEC{u},\VEC{v}, \VEC{x}, \VEC{y}\in\N_0^d$ with
		$\VEC{v} \boldsymbol{\le} \VEC{x}$ in the product order. 
            The clone--mutant chain describes the population through successive mutation events rather than ordinary genealogical generations, and its transitions only depend on the $d$-type BGW process $( \VEC{M}_{k}):k\in\N_0)$. By the definition of the mutant process and the branching property, its easy to see that the process $(|\VEC{M}_{k}|:k\in\N_0)$  is a single-type BGW process, and coincides in distribution with the mutant chain in the infinite allele case. 
	Furthermore, for each $i \in [d]$ and $n\in\N_0$,\begin{equation}\label{eq:condtomcpmatix}
		\VEC{M}_1\big|_{\{\VEC{M}_0=\VEC{e}_i,\,|\VEC{M}_1|=n\}}\sim \mathcal{M}(n,\VEC{P}_{i\cdot}),
	\end{equation}
	where $\mathcal{M}$ denotes the multinomial distribution and $\VEC{P}_{i\cdot}$ the
	$i$-th row of the mutation matrix $\VEC{P}$.

 For the scaling analysis, we will consider a sequence of chains \(\left(
(\VEC T_k^{(n)},\VEC M_{k+1}^{(n)}):k\in\N_0
\right)\) indexed by $n$, with mutation probabilities $r(n)$ and initial populations \(\VEC a^{(n)}\in\N_0^d\). The offspring distribution
\(\mu^+\) and the mutation matrix \(\VEC P\) do not
depend on \(n\). The asymptotic assumptions on
\(r(n)\) and \(\VEC a^{(n)}\) will be specified below.

Next, we recall the scaling limit results established for the mother-dependent model with finite variance and for the infinite-alleles case with regularly varying offspring distributions.

\subsection{The clone-mutant process under for the mother-dependent model with finite variance}

In \cite{BFH1}, we previously studied the asymptotic behaviour of the clone-mutant chain for the (uniform) mother-dependent model, where the associated matrix satisfies $\VEC{P}_{ij} = 1/(d - 1)$ for all $i \neq j$.  The study was conducted  under the following assumptions, which are analogous to those imposed by Bertoin\cite{Bertoin10} for the infinite-alleles case:

\begin{itemize}
    \item[] The total offspring distribution is critical and non-degenerate:
\begin{equation}\label{A1}\tag{A1}
\E[\xi^{(+)}]=1,
\qquad
\mu^+(0)+\mu^+(1)<1; 
\end{equation}
\item[] the offspring variance is finite:
\begin{equation}\label{A2}\tag{A2}  
\sigma^2\coloneqq \text{Var}(\xi^{(+)} )<\infty;
\end{equation}
\item[] for a fixed vector $\VEC{y}\in \R_{+}^d\setminus\{\VEC{0}\}$, the initial population $\VEC{a}^{(n)}\in \N^d_0\setminus\{\VEC{0}\}$ satisfies
\begin{equation}\label{A3}\tag{A3}
		\frac{\VEC{a}^{(n)}(i)}{n} \longrightarrow \VEC{y}(i), \qquad i \in [d]; \text{ and }
\end{equation} 
\item[] the mutation probabilities satisfy
\begin{equation}\label{A4}\tag{A4}
nr(n)\longrightarrow c\in(0,\infty).   
\end{equation}
\end{itemize}
Under assumptions \eqref{A1}--\eqref{A4}, \cite[Theorem 2.8]{BFH1}  establishes that 
\begin{equation} \label{eq:conv-clonemut}
	\left(  \bigl( n^{-2 }\VEC{T}^{(n)}_k,\ n^{-1} \VEC{M}^{(n)}_{k+1} \bigr)  : k\in\N_0 \right) 
	\xRightarrow[n\to\infty]{\mathrm{f.d.d.}}
\bigl( ( \VEC{X}_{k+1}, \VEC{Z}_{k+1}) : k\in\N_0 \bigr),
\end{equation}
where the clone-mutant limit chain is constructed recursively as follows. Set $\VEC{Z}_0\coloneqq \VEC{y}$ such that the mutant-mass vectors can be obtained from nonnegative clone masses; and for $k \in \N_0$,
\begin{enumerate}
	\item[(i)] conditionally on $(\VEC{X}_\ell,\VEC{Z}_\ell)_{\ell \leq k}$, the coordinates
	$\VEC{X}_{k+1}(1),\dots,\VEC{X}_{k+1}(d)$ are independent, with
	\begin{equation*}\label{eq: IG one step}
		\VEC{X}_{k+1}(j) \sim
		\inverseGauss\left(\frac{\VEC{Z}_{k}(j)}{c},\ \frac{\VEC{Z}_{k}(j)^2}{\sigma^2}\right),
		\qquad j \in [d];
	\end{equation*}
	\item[(ii)] 
	\begin{equation*}\label{eq:defZ}
		\VEC{Z}_{k+1}(j)= c[\VEC{X}\VEC{P}]_{j}=\frac{c}{d-1}\sum_{i\neq j}\VEC{X}_{k+1}(i),
		\qquad j \in [d].
	\end{equation*}
\end{enumerate}
In particular, $(\VEC{X}_k : k \in N)$ is a multitype Ji\v{r}ina process with
branching mechanism $\boldsymbol{\kappa}=(\boldsymbol{\kappa}_1,\dots,
\boldsymbol{\kappa}_d)$ given by
\(\boldsymbol{\kappa}_i(\boldsymbol{\lambda})
	= \frac{c}{d-1}\sum_{j\neq i}\kappa(\boldsymbol{\lambda}(j))\) for $\boldsymbol{\lambda}=(\boldsymbol{\lambda}(1),\dots ,\boldsymbol{\lambda}(d))\in \R_+^d $, 
where $\kappa$ is the cumulant of an inverse Gaussian subordinator, that is
\begin{equation*}
    \kappa(q)= \int_0^\infty (1-e^{-qy})\,c^{-1}\nu(\mathrm{d}y)
    = \frac{c}{\sigma^2}\left(\sqrt{ 1+\frac{2q\sigma^2}{c^2}}-1\right),
    \qquad q\in \R_{+}.
\end{equation*}
For further details on multitype Ji\v{r}ina process, see \cite[Section 2.5]{BFH1}.
\subsection{The infinite-alleles clone-mutant process under regular variation}

We now recall the scaling limit obtained by Blancas and Rivero in \cite{BlancasRivero} for the infinite-alleles clone-mutant process when the total offspring distribution has infinite variance. In this setting, the finite-variance assumption  \eqref{A2} in \cite{Bertoin10} is replaced by the following condition. 

Let \(\alpha\in(1,2)\),
\begin{equation}\label{A5}\tag{A5}
\text{the tail distribution of $\xi^{(+)}$ is regularly varying at infinity with index
\(-\alpha\).}    
\end{equation}
According to \cite[Lemma~4.1]{BlancasRivero}, assumption \eqref{A5} yields a normalizing sequence $(L(n): n\geq 0)$, regularly varying at infinity with index $\alpha$, such that,
\[
L(n)\mu^{+}(ndy) \longrightarrow \frac{1}{\Gamma(3-\alpha)}\frac{\mathrm{d}y}{y^{1+\alpha}}
\]
in the sense of vague convergence on $(0,\infty)$. In particular, this implies that
\[
L(n)\,
\E\left[ 1-
 e^{-\lambda\xi^{(+)}/n}- \frac{\lambda\xi^{(+)}}{n}
\right]
\longrightarrow\lambda^\alpha,
\qquad \lambda\geq0.
\]
In this case, the initial population must also be rescaled according to this normalizing sequence. Instead of assumption~\eqref{A3}, the authors impose the following condition.
\begin{itemize}
\item[]
For some $y\in(0,\infty)$, the initial population $a^{(n)}$
satisfies
\begin{equation}\label{A6}\tag{A6}
    \frac{a^{(n)}}{L(n)r(n)}\longrightarrow y.
\end{equation}
\end{itemize}
Notice that assumption~\eqref{A5} together with assumption~\eqref{A4} imply that
$L(n)r(n)$ is regularly varying with index \(\alpha-1>0\). In particular,
\begin{equation}\label{eq:mutRVgoinf}
L(n)r(n)\longrightarrow\infty \qquad \text{and} \qquad  \frac{L(n)r(n)}{n}\longrightarrow 0   
\end{equation}
Thus assumption~\eqref{A6} describes an initial population whose number of individuals is of the same order as the number of mutants produced in the stable rare-mutation regime. 

\begin{remark}\label{re:scale}The constant $c$ in \eqref{A4} relates \(r(n)\) to the scale \(n^{-1}\). However, in this case, mutant counts are normalized by
\(L(n)r(n)\), rather than by \(L(n)/n\). Consequently, \(c\) is already absorbed
into the normalization and does not appear in the limiting type-allocation map. If mutant counts were instead normalized by \(L(n)/n\), the corresponding limiting mutant vector would contain the additional factor \(c\).
\end{remark}
Under assumptions~\eqref{A1}, \eqref{A4}--\eqref{A6}, \cite[Proposition~4.5]{BlancasRivero} establishes convergence of
finite-dimensional distributions of the appropriately normalized
clone-mutant process. In the normalization and notation of that result,
this reads
\begin{equation}\label{eq:stable-clonemut}
\left(
\left(
\frac{T_k^{(n)}}{L(n)},
\frac{M_{k+1}^{(n)}}{L(n)r(n)}
\right):k\in\N_0
\right)
\xRightarrow[n\to\infty]{\mathrm{f.d.d.}}
\left(
(Z_{k+1}^{1/\alpha},Z_{k+1}^{1/\alpha}):k\in\N_0
\right),
\end{equation}
where $(Z_k^{1/\alpha}:k\in\N)$ is a Ji\v{r}ina process
with reproduction measure
\begin{equation}\label{eq:stablemeasure}
\nu^\alpha(\mathrm{d}z)
=
\frac{1}{\alpha\Gamma(1-\alpha^{-1})}
z^{-1-1/\alpha}\,\mathrm{d}z,
\qquad z>0.
\end{equation}
Equivalently, its branching mechanism is the Laplace exponent of a
$1/\alpha$-stable subordinator:
\[
\int_0^\infty (1-e^{-qz})\,\nu^\alpha(\mathrm{d}z)
=q^{1/\alpha},
\qquad q\geq0.
\]

The convergence in \eqref{eq:stable-clonemut} shows that, under the
corresponding normalizations, the total size of an allelic family and
the number of mutants it produces have the same limiting value.
In contrast with the finite-variance case, the limiting reproduction
mechanism is stable rather than inverse Gaussian.

The convergence results \eqref{eq:conv-clonemut} and \eqref{eq:stable-clonemut} motivate the extension developed in the next section.
We consider a finite number of allelic types, with mutation
probabilities depending on the mother's type, and investigate the
clone-mutant process under the regular-variation assumption~\eqref{A5}.
The main question is how the finite-type mutation structure enters
the limiting multitype Ji\v{r}ina process, while the heavy-tailed
reproduction law determines its stable branching mechanism.

\section{Scaling limits for the finite-alleles clone--mutant process}\label{sec:main}

In the case considered here we retain the criticality and regular-variation assumptions \eqref{A1} and \eqref{A5}. Let $(L(n): n\geq 0)$ be the normalizing sequence
specified above. For the $d$-dimensional clone-mutant chain, we replace assumption~\eqref{A6} by the following initial-population condition. For some $\VEC y\in\R_+^d\setminus\{\VEC0\}$,
\begin{equation}\label{A7}\tag{A7}
    \frac{\VEC a^{(n)}(i)}{L(n)r(n)}\longrightarrow y(i),
  \qquad i\in[d]. 
\end{equation}
By \ref{re:scale}, this is equivalent to $\VEC a^{(n)}(i)/L(n)n\to cy(i)$ for every $i\in[d]$.
Thus, the normalization of the initial population accounts for the factor $c$ in the limiting branching mechanism.

In analogy with \cite[Definition 2.6]{BFH1}, we now introduce the \textit{stable clone--mutant limiting chain}. The construction follows
the same recursive principle as in the finite-variance case. At each allelic generation, the mutant mass provides the founders of the next generation of clone families. These families evolve
independently, and the mutant mass they produce is distributed among the $d$ types according to the mutation matrix $\VEC P$.
The main difference lies in the distribution of the clone masses, which is now governed by stable subordinators rather than inverse-Gaussian subordinators. In particular, as in the finite-variance case, the transition to the next allelic generation depends on the present state only through its mutant coordinate $\VEC v$. 

\begin{definition}
\label{def:stable-limit-chain}
Let \(\VEC y\in\R_+^d\setminus\{\VEC0\}\).
The clone--mutant limiting chain in the stable regime started from \(\VEC y\) is the \(\R_+^{2d}\)-valued process
\[
\bigl((\VEC X_k,\VEC Z_k):k\in\N\bigr),
\]
constructed recursively as follows.
Set \(\VEC Z_0=\VEC y\) and, for \(k\in\N_0\),
\begin{enumerate}
\item[(i)]
Conditionally on the preceding allelic generations,
the coordinates
\(\VEC X_{k+1}(1),\ldots,\VEC X_{k+1}(d)\)
are independent and
\begin{equation}\label{eq:stable-one-step}
\E\left[
e^{-\lambda\VEC X_{k+1}(j)}
\mid\VEC Z_k
\right]
=
\exp\left\{
-\VEC Z_k(j)c\kappa_c(\lambda)
\right\},
\quad \lambda\geq0,\ j\in[d],
\end{equation}
where $\kappa_c$ is the Laplace exponent of the first passage time process of a spectrally positive $\alpha$-stable subordinator with drift $c$, that is, \(\kappa_c(\lambda)\) is the unique nonnegative solution of
 \begin{equation}\label{eq:substadift}
     \kappa_c(\lambda)^\alpha
+c\kappa_c(\lambda)=\lambda.
 \end{equation}
\item[(ii)]
\begin{equation}\label{eq:stable-defZ}
\VEC Z_{k+1}
= \VEC X_{k+1}\VEC P,
\qquad \Longleftrightarrow \qquad \VEC Z_{k+1}(j)
=
\sum_{i\neq j}\VEC X_{k+1}(i)\VEC{P}_{ij},
\qquad j\in[d].
\end{equation}
\end{enumerate}
\end{definition}

\begin{remark}
Setting $c=0$ in \ref{eq:substadift} gives the Laplace exponent of the first-passage-time of an $1/\alpha$-stable subordinator associated with a spectrally positive $\alpha$-stable Lévy process with measure given in \eqref{eq:stablemeasure},  In terms of the model, the constant $c$ enters through the limiting relation between clone masses
and mutant masses, $\mathbf Z=c\,\mathbf X\mathbf P$. In the
first-passage representation, this contribution appears as the linear
drift term $c\kappa_c(\lambda)$. 
\end{remark}

We can now state our main scaling-limit theorem.
\begin{theorem}\label{thm:chain}
Suppose that Assumptions~\eqref{A1},\eqref{A5} and \eqref{A7} hold. Then
\begin{equation}\label{eq:clone-mutant-convergence}
\left(
\left(
\frac{\VEC T_k^{(n)}}{L(n)},
\frac{\VEC M_{k+1}^{(n)}}{L(n)r(n)}
\right):k\in\N_0
\right)
\xRightarrow[n\to\infty]{\mathrm{f.d.d.}}
\left(
(\VEC X_{k+1},\VEC Z_{k+1}):
k\in\N_0
\right).
\end{equation}
Moreover, \((\VEC Z_k:k\in\N_0)\) is a
multitype Ji\v{r}ina process with branching mechanism
\begin{equation}\label{eq:GMFk}
K_i(\boldsymbol{\lambda})
=
c \kappa_c\left(
\sum_{j=1}^dP_{ij}\boldsymbol{\lambda}(j)
\right),
\qquad i\in[d].
\end{equation}
\end{theorem}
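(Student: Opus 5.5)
The plan is to reduce everything to a one-step Laplace transform computation and then use the Markov property \eqref{eq:transitionproba} together with the multinomial colouring \eqref{eq:condtomcpmatix}. By \eqref{eq:transitionproba}, the law of $(\VEC T_k,\VEC M_{k+1})$ given the past depends only on $\VEC M_k$, and started from $\VEC M_k=\VEC v$ the clone families of different founders are independent. Consequently $\VEC T_k(j)$ is a sum of $\VEC v(j)$ i.i.d.\ copies of the clone-family size $T$ of a single founder. This size is the total progeny of a BGW tree with offspring law $\mathrm{Bin}(\xi^{(+)},1-r)$, and its mutant output $M$ is the number of mutant children of that family. Given $|\VEC M_{k+1}|$, each mutant from a type-$i$ family receives a type according to the row $\VEC P_{i\cdot}$, independently of everything else. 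It therefore suffices to prove a one-step statement. If $\VEC v^{(n)}/(L(n)r(n))\to\VEC z$, then the joint Laplace transform of $(\VEC T_0/L(n),\VEC M_1/(L(n)r(n)))$ under $\mathbb P_{\VEC v^{(n)}}$ converges to that of $(\VEC X,\VEC X\VEC P)$, where the coordinates of $\VEC X$ are independent with exponents $\VEC z(j)c\kappa_c$. Finite-dimensional convergence then follows by induction on $k$ in the usual way. Conditioning on $\VEC M_k^{(n)}$, the one-step limit is uniform on compact sets of the rescaled mutant variable (continuity in $\VEC z$), and we iterate the conditional Laplace functionals, exactly as in \cite[Theorem 2.8]{BFH1}.

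For the single-family computation I would use the joint law of $(T,M)$ for one founder. Each of the $T$ individuals has $\xi^{(+)}$ children, and each child independently is a mutant with probability $r$. Given $T$, the count $M$ is therefore the number of mutant children among the $T$ families, and the colouring contributes $\E[e^{-\langle\boldsymbol\mu,\text{colours}\rangle}\mid M]=\bigl(\sum_j P_{ij}e^{-\boldsymbol\mu(j)/(L r)}\bigr)^M$. Writing $\theta=\sum_jP_{ij}\boldsymbol\mu(j)$, the problem reduces to the scalar asymptotics of $1-\E[e^{-\lambda T/L(n)-\theta M/(L(n)r(n))}]$. The clone family is the total progeny of a BGW process, so its generating function $\phi(s,u)=\E[s^Tu^M]$ satisfies the Lagrange-type fixed-point equation
\[
\phi=s\,\E\bigl[(1-r+ru)^{\xi^{(+)}}\cdots\bigr],
\]
more precisely $\phi = s\,g\bigl((1-r)\phi+ru\bigr)$, where $g$ is the generating function of $\mu^+$. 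Set $s=e^{-\lambda/L}$, $u=e^{-\theta/(Lr)}$ and $\phi=1-\kappa^{(n)}/(L(n)r(n))\cdot(\text{scale})$. The correct scale is $1-\phi\approx \kappa/n$ up to constants, since $L(n)r(n)\cdot(1/n)\cdot$ the mutation scaling must match. We then expand $g(1-x)=1-x+x^\alpha/L(n)$-type terms using the regular variation consequence $L(n)\E[1-e^{-\lambda\xi/n}-\lambda\xi/n]\to\lambda^\alpha$. In the limit this yields the equation $\kappa^\alpha+c\kappa+c\theta=\lambda$ (up to the normalisation built into \eqref{A7} and Remark~\ref{re:scale}). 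Then $L r(1-\phi)$ converges to $c\,\kappa_c(\lambda+c\theta)$-type expressions, which after bookkeeping should give the exponent $\VEC z(i)c\kappa_c(\cdot)$ in \eqref{eq:stable-one-step} together with the relation $\VEC Z=\VEC X\VEC P$. The real consistency check is that the mutant coordinate and the clone coordinate must have the same limit mass: $M/(Lr)$ and the $r$-thinning of $T$ coincide asymptotically by a law of large numbers, since $Tr\to\infty$. I would establish this directly by showing that $\E[(M-rT)^2]/(Lr)^2\to0$ on the event where $T/L$ is bounded. Given this, only the $\lambda$ dependence needs the fixed-point analysis, and we can take $\theta=0$ there, which simplifies matters. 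Uniqueness of the nonnegative root of \eqref{eq:substadift}, plus monotonicity, gives convergence of the fixed-point solutions.

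Finally, the statement about $(\VEC Z_k)$ follows by composition. By (ii), $\E[e^{-\langle\boldsymbol\lambda,\VEC Z_{k+1}\rangle}\mid\VEC Z_k]=\E[e^{-\sum_i\VEC X_{k+1}(i)\sum_jP_{ij}\boldsymbol\lambda(j)}\mid\VEC Z_k]$. By (i) this equals $\exp\{-\sum_i\VEC Z_k(i)K_i(\boldsymbol\lambda)\}$ with $K_i$ as in \eqref{eq:GMFk}, which is the defining Laplace functional of a multitype Ji\v{r}ina process. The main obstacle I expect is the fixed-point asymptotics in the second step. One must get the precise normalisation right: reconciling the scales $L(n)$, $L(n)r(n)$ and $n$, and hence the exact placement of the constant $c$. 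One must also control the error terms in the expansion of $g$ uniformly, using only vague convergence of $L(n)\mu^+(n\,\mathrm dy)$. I would first try to import this step directly from \cite[Proposition 4.5]{BlancasRivero} for the infinite-alleles case, where $|\VEC M|$ has exactly this law. The colouring is then handled separately by the multinomial identity \eqref{eq:condtomcpmatix}, which would avoid redoing the analytic estimates.
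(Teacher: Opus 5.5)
Your outer structure is the paper's: reduction to one allelic generation via \eqref{eq:transitionproba}, the multinomial colouring that produces the factor $\bigl(\sum_jP_{ij}e^{-\VEC t(j)/(L(n)r(n))}\bigr)^{|\VEC M_1^{(n)}|}$, the product over founders, and induction as in \cite{BFH1}. Your composition argument for $K_i$ is also correct. The gap is the single-founder asymptotics. That is the only substantive step, and you leave it at ``after bookkeeping should give''. Carry out your fixed-point expansion with $s=e^{-\lambda/L(n)}$, $u=e^{-\theta/(L(n)r(n))}$, $x_n=1-\phi$ and $\tilde\Psi(x)=g(1-x)-1+x$, using $r(n)(1-u)=\theta/L(n)+o(1/L(n))$. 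You get
\[
L(n)\tilde\Psi(x_n)+L(n)r(n)\,x_n\longrightarrow\lambda+\theta .
\]
With $x_n=\kappa_n/n$, the limit equation is therefore $\kappa^\alpha+\gamma\kappa=\lambda+\theta$ with $\gamma=\lim_n L(n)r(n)/n$; note $\lambda+\theta$, not $\kappa^\alpha+c\kappa+c\theta=\lambda$. Under \eqref{A4}, which is the only source of $c$, \eqref{eq:mutRVgoinf} gives $\gamma=0$. Then the drift term disappears and $\kappa_n\to(\lambda+\theta)^{1/\alpha}$. Worse, the per-founder exponent that multiplies $\VEC a^{(n)}(i)/(L(n)r(n))\to\VEC y(i)$ is $L(n)r(n)x_n=(L(n)r(n)/n)\kappa_n$, which tends to $0$, so the first-generation transform tends to $1$. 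The term $c\kappa$ and the exponent $\VEC y(i)\,c\,\kappa_c\bigl(\VEC s(i)+\sum_jP_{ij}\VEC t(j)\bigr)$ required by Definition~\ref{def:stable-limit-chain} appear exactly when $L(n)r(n)/n\to c$, i.e.\ $r(n)\sim c\,n/L(n)$ rather than $nr(n)\to c$. So ``reconciling the scales'' is not bookkeeping. It is a different mutation regime that has to be assumed, and your argument neither identifies it nor works without it.

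Your fallback does not close this. Importing the single-family asymptotics from \cite{BlancasRivero} is exactly the paper's route: its Lemma~4.7(i), then \eqref{eq:convP} and the product over founders. But that route delivers, and the paper's proof ends with, the exponent $\sum_i\VEC y(i)\bigl(\VEC s(i)+\sum_jP_{ij}\VEC t(j)\bigr)^{1/\alpha}$. That is pure $1/\alpha$-stable branching, not $c\kappa_c$. Your two routes therefore lead to different limits and cannot be swapped ``to avoid redoing the analytic estimates''; you must fix the regime and derive the exponent in it. Finally, the law-of-large-numbers step via $\E[(M-rT)^2]$ is not available under \eqref{A5}. The mutant count of a single individual, $\mathrm{Bin}(\xi^{(+)},r)$, already has infinite variance. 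Restricting to $\{T\le KL(n)\}$ does not truncate the summands unless you add a truncation and optional-stopping argument. The step is also unnecessary. If you keep $\theta$ in the fixed-point equation, the limit depends on $(\lambda,\theta)$ only through $\lambda+\theta$, which says exactly that $T/L(n)$ and $M/(L(n)r(n))$ have the same limit.
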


\begin{proof}    
We prove the result by induction on the allelic generation. First, we establish convergence for the initial allelic generation, corresponding to the pair $(\VEC{T}_0^{(n)}, \VEC{M}_1^{(n)})$, starting from a single individual of type $i\in[d]$. All individuals  are of type $i$ up to the first mutation along a lineage. Therefore, after forgetting the
types assigned at mutation, the pair formed by its total clone-family size and its total number of mutant children has exactly the law of the pair $(T_0^{(n)}, M_1^{(n)})$ considered in \cite[Lemma 4.7(i)]{BlancasRivero}.

Let $\VEC s,\VEC t\in\R_+^d$. Conditionally on the total number $\vert \VEC M_1^{(n)} \vert$ of mutants produced by a type-$i$ clone family, their types are independent with common distribution
$(P_{i1},\ldots,P_{id})$. Consequently,
\begin{equation}\label{eq:conditional-colouring-transform}
\begin{split}
    &\E_{\VEC e_i}^{r(n)}
 \left[
 \exp\left\{
 -\frac{\VEC s(i)}{L(n)} \VEC T_0^{(n)}(i)
 -\frac{\langle\VEC t,\VEC M_1^{(n)} \rangle}
        {L(n)r(n)}
 \right\}
 \,\middle|\,\VEC T_0^{(n)}(i), \vert \VEC M_1^{(n)} \vert
 \right]\\
&\quad =
 \exp\left\{-\frac{\VEC s(i)}{L(n)}\VEC T_0^{(n)}(i) \right\}
 \left(
 \sum_{j=1}^d
 P_{ij}\exp\left\{-\frac{\VEC t(j)}{L(n)r(n)}\right\}
 \right)^{ \vert \VEC M_1^{(n)} \vert } .
\end{split}
\end{equation}
Hence, we can write
\begin{equation*}\label{eq:condcoltrans}
\begin{split}
&\E_{\VEC e_i}^{r(n)}
 \left[
 \exp\left\{
 -\frac{\VEC s(i)}{L(n)} \VEC T_0^{(n)}(i)
 -\frac{\langle\VEC t,\VEC M_1^{(n)} \rangle}
        {L(n)r(n)}
 \right\}
 \right]
 \notag\\
&\quad =
\E_{\VEC e_i}^{r(n)}
 \left[ e^{-\frac{\VEC s(i)}{L(n)}\VEC T_0^{(n)}(i)}
\exp\left\{ - \frac{ \vert \VEC M_1^{(n)} \vert }{ L(n)r(n)} \log
\left(
 \sum_{j=1}^d
 P_{ij}\exp\left\{-\frac{\VEC t(j)}{L(n)r(n)}\right\}
 \right)^{-L(n)r(n)} \right\}  \right].    
\end{split}
\end{equation*}
By using \eqref{eq:mutRVgoinf}, we have that
\begin{equation}\label{eq:convP}
-L(n)r(n)
\log\left(
\sum_{j=1}^d
P_{ij}\exp\left\{-\frac{\VEC t(j)}{L(n)r(n)}\right\}
\right)
\longrightarrow
\sum_{j=1}^dP_{ij}\VEC t(j).    
\end{equation}
Therefore, if we apply \eqref{eq:convP} together with the infinite--alleles result \cite[Lemma~4.7(i)]{BlancasRivero}  to \eqref{eq:conditional-colouring-transform}, we obtain
\begin{equation}
\E_{\VEC e_i}^{r(n)}
\left[
\exp\left\{
-\frac{\VEC s(i)}{L(n)} \VEC T_0^{(n)}(i)
-\frac{\langle\VEC t,\VEC M_1^{(n)}\rangle}
       {L(n)r(n)}
\right\}
\right]
\longrightarrow
\left(
\VEC s(i)+\sum_{j=1}^dP_{ij}\VEC t(j)
\right)^{1/\alpha}.
\label{eq:one-founder-joint-exponent}
\end{equation}
For an arbitrary initial population
\(\VEC a^{(n)}\), we use the general branching property to obtain that
\begin{equation*}\label{eq:first-generation-product}
\begin{split}
&\E_{\VEC a^{(n)}}^{r(n)}
\left[
\exp\left\{
-\frac{\langle\VEC s,\VEC T_0^{(n)}\rangle}{L(n)}
-\frac{\langle\VEC t,\VEC M_1^{(n)}\rangle}
       {L(n)r(n)}
\right\}
\right]\\
&\quad =
\prod_{i=1}^d
\left[
\E_{\VEC e_i}^{r(n)}
\left[
\exp\left\{
-\frac{\VEC s(i)}{L(n)}\VEC T_0^{(n,i)}
-\frac{\langle\VEC t,\VEC M_1^{(n,i)}\rangle}
       {L(n)r(n)}
\right\}
\right]
\right]^{\VEC a^{(n)}(i)},
\end{split}
\end{equation*}
and applying \eqref{eq:one-founder-joint-exponent} together with \eqref{A7} we obtain
\begin{equation*}\label{eq:first-generation-limit-transform}
\E_{\VEC a^{(n)}}^{r(n)}
\left[
\exp\left\{
-\frac{\langle\VEC s,\VEC T_0^{(n)}\rangle}{L(n)}
-\frac{\langle\VEC t,\VEC M_1^{(n)}\rangle}
       {L(n)r(n)}
\right\}
\right]\longrightarrow
\exp\left\{
-\sum_{i=1}^d \VEC y(i)
\left(
\VEC s(i)+\sum_{j=1}^dP_{ij}\VEC t(j)
\right)^{1/\alpha}
\right\}.
\end{equation*}
The induction step follows directly from the transition probabilities in \eqref{eq:transitionproba}, as in \cite[Lemma~2.2]{BFH1}.
\end{proof}

\section{Further implications on stable limits of the allelic genealogy}\label{sec:out}

The convergence of the clone--mutant Markov chain established here provides a starting point for studying the asymptotic genealogical structure of the allelic subfamilies. 

A natural extension is to establish convergence of the multitype tree of alleles, following the approaches developed in \cite{BlancasRivero} for branching processes with infinite-variance offspring distributions and in \cite{BFH1} for the finite-variance mother-dependent model. More precisely, one can work with the $d$-type tree of alleles constructed in \cite[Section 4.2]{BFH1}.  The scaling limit obtained here suggests that the rescaled tree of alleles should converge, in the sense of finite-dimensional distributions, to a tree-indexed multitype Ji\v{r}ina process, see \cite[Definition 2.10]{BFH1}. As in the finite-variance case, the limiting subfamilies can be described through the ranked atoms of Poisson random measures. The key difference is that the inverse-Gaussian L\'evy measure is replaced by the L\'evy measure of the stable subordinator arising in the present regime. The mutation matrix $\VEC P$ determines the types
of the descendant subfamilies.

A further direction is to investigate the scaling limit of the clone--mutant chain conditioned on non-extinction.
In the infinite-alleles setting, \cite{BlancasRivero} established that, under the $Q$-process, the rescaled
clone--mutant chain converges to a discrete-time
continuous-state branching process with immigration.
In the stable regime, the limiting reproduction
mechanism is determined by the L\'evy measure
$\nu_\alpha$, whereas the immigration mechanism is governed by $z\nu_\alpha(dz)$.
The corresponding conditioned genealogy can be interpreted in terms of a tree of alleles with immigration.
Theorem \ref{thm:chain} suggests a multitype analogue for the general mother-dependent model. In particular, the
$Q$-process of the multitype clone--mutant chain,
whose construction and genealogical representation
are studied in \cite{BFH2}, provides a natural
starting point for investigating the stable scaling
limit under conditioning on non-extinction.
Under our hypotheses, one expects a multitype Ji\v{r}ina process with immigration, in which the mutation matrix $\VEC P$ determines the distribution of offspring masses across types.

\bibliography{biblio.bib}
\bibliographystyle{plain}

\end{document}